\documentclass{article}

\newcommand {\theoremstyle} [1] { }
\newenvironment{proof}{{\noindent\it\underline{Proof}}:}{\hfill$\Box$}
\newenvironment{proof1}
{\noindent\it\underline{Proof of Theorem \ref{bound}}:\rm}{\hfill$\Box$}
\newtheorem{thm}{Theorem}[section]
 
 \theoremstyle{plain}
  
 \newtheorem{cor}[thm]{Corollary}
 \theoremstyle{definition}
 \newtheorem{defn}[thm]{Definition}
 
 \theoremstyle{remark}
 \newtheorem{rem}[thm]{Remark}

\usepackage {amssymb}

\usepackage {amssymb}
\usepackage{amsmath,amsfonts, color}
\usepackage{graphicx}

\title{{ On the Solvability of the Periodically Forced Relativistic Pendulum Equation on Time Scales}}
\author{{\large P. Amster, M. P. Kuna, D. P. Santos}
{\bf\large}\vspace{1mm}\\
{\small Departamento de Matem\'atica, Facultad de Ciencias Exactas y Naturales}\\ 
{\small Universidad de Buenos Aires \& IMAS-CONICET}\\
{\small Ciudad Universitaria. Pabellon I,(1428), Buenos Aires, Argentina.}\\
{\small pamster@dm.uba.ar -- mpkuna@dm.uba.ar -- dsantos@dm.uba.ar}
\vspace{3mm}
}

\date{}

\begin{document}

\maketitle

\begin{abstract}
We study some properties of the range of the relativistic pendulum operator $\mathcal P$, that is, the set of possible continuous $T$-periodic forcing terms $p$ for which the equation $\mathcal P x=p$ admits a $T$-periodic solution over a $T$-periodic time scale $\mathbb T$. 
Writing $p(t)=p_0(t)+\overline p$, we prove the existence of a nonempty compact interval $\mathcal I(p_0)$, depending continuously on $p_0$,  such that the problem has a solution if and only if $\overline p\in \mathcal I(p_0)$ and at least two different solutions when $\overline p$ is an interior point.  
Furthermore, we give sufficient conditions for nondegeneracy; specifically, 
we prove that if $T$ is small then $\mathcal I(p_0)$ is a neighbourhood of $0$ for 
arbitrary $p_0$. 
The results   
in the present paper improve the 
smallness condition obtained in previous works for the continuous case $\mathbb T=\mathbb R$. 

\end{abstract}
 
 \medskip

\noindent 
\textit{Mathematics Subject Classification (2010)}: 34N05; 34C25; 47H11. 
\smallskip 

\noindent 
\textit{Keywords}: 
{Relativistic pendulum; Periodic solutions; Time scales; Degenerate equations.
}


 \medskip
 \noindent


\section{Introduction}
The $T$-periodic problem for the forced relativistic pendulum equation on time scales reads
\begin{equation}\label{eq1}
\mathcal Px(t):=(\varphi(x^{\Delta}(t)))^{\Delta} +ax^{\Delta}(t)+ b\sin x(t) =p_0(t) + s,  \ \ \ t\in \mathbb{T},
\end{equation}
where $a,b>0$ and $s$  are real numbers, ${\mathbb{T}}$ is an arbitrary $T$-periodic nonempty closed subset of  $\mathbb{R}$ for some  $T>0$, $\varphi:(-c,c)\rightarrow \mathbb{R}$ is the  relativistic operator  $\varphi(x):=\displaystyle \frac{x}{\sqrt{1-\frac{x^{2}}{c^{2}}}} $ with $c>0$ and $p_{0}$ is continuous and $T$-periodic in $\mathbb{T}$,  with zero average. In this work, we are concerned with the set of all possible values of $s$ such that (\ref{eq1}) admits a $T$-periodic solution.

The time scales theory was introduced in 1988, in the PhD thesis of  Stefan Hilger \cite{hilgerthe}, as an attempt to unify discrete and continuous calculus.
 The time scale $\mathbb R$ corresponds to the continuous case and, 
 hence, yields
 results for ordinary differential equations. If the time scale is ${\mathbb{Z}}$, 
then the results apply to standard difference equations. However, the generality of the set $\mathbb T$ produces many different situations in which the time scales formalism is useful  in several applications. For example, in the study of hybrid discrete-continuous dynamical systems, see \cite{bo1}.

In the past decades, periodic problems involving the relativistic forced pendulum   differential equation for the continuous case $\mathbb T=\mathbb R$ were studied by many authors, see \cite{bjm,ma1,bre,man11,torres,torres2}. In particular, the works
 \cite{bjm, torres2}  are concerned with  the so-called \textit{solvability set}, that is, the  
 set $\mathcal I(p_{0})$ of  values of $s$ for which (\ref{eq1}) has at least one $T$-periodic solution. 
 We remark 
that problem (\ref{eq1}) is $2\pi$-periodic and, consequently,  
if $x$ is a $T$-periodic solution then 
$x+2k\pi$ is also a $T$-periodic solution for all $k\in \mathbb Z$. For this reason, the multiplicity
results for (\ref{eq1}) usually refer to the existence of \textit{geometrically distinct} $T$-periodic solutions, i.e. solutions not differing by a multiple of $2\pi$. 

{For the standard pendulum equation with $a=0$, 
the solvability set was analyzed in 
the pioneering work \cite{castro}, where it was proved that 
$\mathcal I(p_{0})\subset [-b,b]$ is a nonempty compact interval containing $0$. 
Moreover, $\mathcal I(p_0)$ depends continuously on $p_0$. 
These results were partially extended to the relativistic case in \cite{bre}; however, the method of proof in both
works is variational and, consequently, cannot be applied to the case $a>0$. This latter situation was studied in \cite{fm} for the standard pendulum and in \cite{torres2} for the relativistic case.
An interesting question, stated already in \cite{castro} is whether or not the equation may be {degenerate}, namely: is there any $p_0$ such that $\mathcal I(p_0)$ reduces to a single point? 
Many works are devoted to this problem and, for the classical pendulum, 
nondegeneracy has been proved for an open and dense subset of $\tilde C_T$, the space of zero-average $T$-periodic continuous functions. 
However, the question for arbitrary $p_0$ remains unsolved. 
For a survey on the pendulum equation and open problems see for example \cite{maw-75}.  
}


The purpose of this  work is  to  
{extend   the  results  in \cite{bjm} and \cite{torres2} to the context of time scales.} 
{To this end, we prove in the first place that the set 
$\mathcal I(p_{0})$ is a nonempty compact interval depending continuously on $p_0$. }
The method of proof is inspired in a simple idea introduced in  \cite{fm} for the standard pendulum equation, which basically employs the Schauder Theorem and the method of upper and lower solutions. 
Moreover, by a Leray-Schauder degree argument it shall be proved that if $s$ is an interior point of $\mathcal I(p_0)$, then 
the problem admits at {least} two geometrically distinct periodic solutions.
  
Furthermore, sufficient   conditions 
shall be given in order to guarantee that $0\in \mathcal I(p_0)$. We recall that, when $a\ne 0$, this is not trivial even in the continuous case $\mathbb T=\mathbb R$. For the classical pendulum equation, there exist well known examples 
with $0\notin \mathcal I(p_0)$ for arbitrary values of $T$; for the relativistic case, 
it was proved in 
\cite{bjm}  that, if 
$cT < \sqrt{3} \pi$, then 
$0\in \mathcal I(p_0)^\circ$. 
It is worth noticing that, however,
 the problem is still open for large values of $T$. 
As we shall see, a slight improvement of the previous bound can be deduced from the results in the present paper. Specifically, we shall prove the existence of $T^*$ with $cT^*>\pi$ such that if $T\le T^*$ then 
$0\in \mathcal I(p_0)$   and it is an interior point when the inequality is strict.  
An inferior 
bound for $T^*$ can be characterized as a zero of a real function; for the continuous case $\mathbb T=\mathbb R$, it is shown that $cT^* >\sqrt 3\pi$ and  verified numerically that
$cT^* > 6.318$. 
We remark that the computation is independent of $p_0$: in other words, if $T<T^*$, then the range of the operator $\mathcal P$ contains a set of the form $\tilde C_T+[-\varepsilon,\varepsilon]$ for some $\varepsilon>0$. 

We highlight that our paper is devoted to equations on time scales that involve a $\varphi$-laplacian of relativistic  type, for which the literature is scarce. 
For example, in \cite{pras}, the existence of heteroclinic solutions for a family of  equations on time scales that includes the 
unforced relativistic pendulum is proved. 
However, to our knowledge there are no  papers concerned with periodic solutions and, more precisely,  the solvability set for equations with a singular $\varphi$-laplacian on time scales.

This work is organized as follows.  In Section 2,  we establish  the notation,  terminology  and   preliminary  results  which  will  be  used  throughout  the paper. In Section 3 we prove that the set $\mathcal I(p_0)$ is a nonempty compact interval depending continuously on $p_0$, and 
that two geometrically distinct $T$-periodic solutions exist when $s$ is an interior point.
Finally, Section 4 is devoted to find sufficient conditions in order to guarantee that  
$0\in \mathcal I(p_0)$ and  improve the condition obtained in \cite{bjm} for the continuous case.

\section{Notation and preliminaries}
\label{S:-1}

Fix $T>0$ and assume that $\mathbb T$ is $T$-periodic, i.e. $\mathbb T+T=\mathbb T$. Let 
$C_{T}=C_{T}\left(\mathbb{T},\mathbb{R} \right)$ be the Banach space of all continuous $T$-periodic functions  on $\mathbb{T}$  endowed with the uniform norm 
$$\left\| x \right\|_{\infty}=\displaystyle  \sup _{\mathbb{T}} |x(t)| = \sup _{\left[0,T\right]_{\mathbb{T}}} |x(t)|$$ and let $\tilde{C_T}$ be
the subspace of those elements of $C_T$ having zero average. 
By
$C^{1}_{T}=C^{1}_{T}\left(\mathbb{T},\mathbb{R} \right)$ we shall denote the Banach space  of all continuous $T$-periodic functions on $\mathbb{T}$ that are $\Delta$-differentiable functions with continuous  $\Delta$-derivatives, endowed with the usual norm
 $$\left\|x\right\|_{1}= \sup _{\left[0,T\right]_{\mathbb{T}}} |x(t)|+ \displaystyle \sup _{\left[0,T\right]_{\mathbb{T}}}|x^{\Delta}(t)|.$$

Equation (\ref{eq1}) can be written as
\begin{equation}\label{eq2}
(\varphi(x^{\Delta}(t)))^{\Delta} = f(t,x(t),x^{\Delta}(t))  \ \ \ t\in \mathbb{T},
\end{equation}
where  $f: \mathbb{T}\times \mathbb{R} \times  \mathbb{R}  \rightarrow \mathbb{R}$ is the  continuous function given by
$f(t,u,v):=p_0(t) + s - au - b\sin(u) $. 
A function $x\in C^{1}_{T}$ is said to be a solution of (\ref{eq2}) if $\varphi(x^{\Delta})\in  C^{1}_{T}$ and  verifies
$(\varphi(x^{\Delta}(t)))^{\Delta} = f(t,x(t),x^{\Delta}(t))$ for all $t\in \mathbb{T}$. We remark that necessarily $\|x\|_\infty<c$. 

For $x\in C_T$, the 
average, the  maximum value and the minimum value of $x$ shall be denoted respectively by $\overline x$, $x_{\max}$ and $x_{\min}$, namely
$$
\overline x:= \frac 1T\int_0^T x(t)\Delta t, \qquad x_{\max}:= \max_{t\in [0,T]_\mathbb T} x(t)\qquad x_{\min}:= \min_{t\in [0,T]_\mathbb T} x(t).
$$
For details on time scales theory we refer the reader to \cite{bo1,bo2}. 




\subsection{Upper and lower solutions and degree}


 Let us  define $T$-periodic lower and upper solutions for problem  (\ref{eq2}) as follows.
 
\begin{defn} A lower $T$-periodic solution $\alpha$ (resp. upper solution $\beta$) of (\ref{eq2}) is a function $\alpha \in C^{1}_T$ with  $\left\| \alpha^{\Delta} \right\|_{\infty}<c$ such that $\varphi(\alpha^{\Delta})$ is continuously $\Delta$-differentiable 
and 
\begin{equation}\label{eq3}
\left(\varphi\left(\alpha^{\Delta}(t)\right)\right)^{\Delta} \geq f(t,\alpha(t), \alpha^{\Delta}(t)) \ \ \   (resp. \  \left(\varphi\left(\beta^{\Delta}(t)\right)\right)^{\Delta} \leq f(t,\beta(t),\beta^{\Delta}(t)))
\end{equation}
for all $t\in \mathbb T$.  
Such lower (upper) solution is called 
{\rm strict} if the inequality  (\ref{eq3}) is strict for all $t\in\mathbb T$. 
\end{defn}


  



It is worth recalling   the problem of finding $T$-periodic solutions of (\ref{eq2}) over the closure of the set
$$\Omega_{\alpha,\beta} :=\{ x\in C^1_T: \alpha(t)\le x(t)\le \beta(t)\,\hbox{ for all $t$}\} 
$$
can be reduced to a fixed point equation $x=M_f(x)$, where $M_f:\overline\Omega_{\alpha,\beta}\to C^1_T$ is a compact operator that can be defined according to the nonlinear version of the continuation method (see e.g. \cite{man12}), namely
$$M_f(x):= \overline x + \overline {N_fx} + 
K(N_fx - \overline {N_fx}),
$$
where $N_f$ is the Nemitskii operator associated to $f$ and $K:\tilde C_T\to \tilde C_T$ is the (nonlinear) compact operator given by $K\xi=x$, with $x\in C_T^1$ the unique solution of the problem  
$(\varphi(x^{\Delta}(t)))^{\Delta} =\xi(t)$ 
with zero average. We recall, for the reader's convenience, that the definition of $K$ based upon
the existence, easy to prove, of a (unique) completely continuous map $c:C_T\to \mathbb R$ satisfying
$\int_0^T \varphi^{-1}(h+c(h))\Delta t=0$ for all $h\in {C_T}$. 
For the purposes of the present paper, we shall only need the following result, which is an  adaptation of Theorem {3.7} in \cite{opuscula}: 


\begin{thm}\label{te2}
 Suppose that (\ref{eq2}) has a $T$-periodic
 lower solution $\alpha$ and  an   
 upper solution $\beta$  such that $\alpha(t)\leq \beta(t)$ for all $t\in \mathbb T$. 
Then problem (\ref{eq1}) has at least one $T$-periodic solution $x$ with $\alpha(t)\le x(t)\le \beta(t)$ for all $t\in \mathbb T$. If furthermore $\alpha$ and $\beta$ are strict, then  $\deg_{LS}(I-M_f,\Omega_{\alpha, \beta}(0),0)= 1$,
where $\deg_{LS}$ stands for the Leray-Schauder degree.
\end{thm}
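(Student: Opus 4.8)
The plan is to use the classical method of upper and lower solutions together with the Leray--Schauder degree, adapting it to the singular relativistic operator $\varphi$ on the time scale $\mathbb T$. First I would replace $f$ by a modification that coincides with it on the order interval and is penalised outside. Writing $\gamma(t,u):=\max\{\alpha(t),\min\{u,\beta(t)\}\}$, I set
$$
\tilde f(t,u,v):=f\big(t,\gamma(t,u),v\big)+\frac{u-\gamma(t,u)}{1+|u-\gamma(t,u)|},
$$
so that $\tilde f=f$ on $\Omega_{\alpha,\beta}$, while the penalty is strictly positive when $u>\beta(t)$ and strictly negative when $u<\alpha(t)$. The decisive structural feature of the relativistic case is that $\varphi^{-1}$ takes values in $(-c,c)$; hence $\|(K\xi)^\Delta\|_\infty<c$ for every $\xi\in\tilde C_T$, and the modified operator $M_{\tilde f}$ is completely continuous with range contained in $\{\,y\in C^1_T:\|y^\Delta\|_\infty<c\,\}$. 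On that set the penalised nonlinearity is bounded, which is what yields the compactness and a priori control used below.

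The heart of the argument is the localisation estimate: every fixed point $x$ of $M_{\tilde f}$ satisfies $\alpha\le x\le\beta$. Suppose, for contradiction, that $w:=x-\beta$ attains a positive maximum at some $t_0\in[0,T]_{\mathbb T}$. In the continuous case one uses that, $\varphi$ being increasing, $\varphi(x^\Delta)-\varphi(\beta^\Delta)$ also attains a maximum at $t_0$, so that $(\varphi(x^\Delta))^\Delta(t_0)\le(\varphi(\beta^\Delta))^\Delta(t_0)$; together with the upper-solution inequality \eqref{eq3} and the strictly positive penalty this produces the contradiction $0<0$. The main obstacle is to carry this maximum-principle step over to the time scale: one must distinguish whether $t_0$ is right-dense or right-scattered and, in the latter case, control the forward jump of $\varphi(x^\Delta)-\varphi(\beta^\Delta)$ directly from the definition of the $\Delta$-derivative, the naive ``second $\Delta$-derivative $\le 0$'' reasoning being unavailable. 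This is exactly the point at which the adaptation of Theorem~3.7 in \cite{opuscula} is needed. The bound $x\ge\alpha$ is symmetric, and since $\tilde f=f$ on $\Omega_{\alpha,\beta}$, every fixed point of $M_{\tilde f}$ is a genuine $T$-periodic solution of \eqref{eq1} with $\alpha\le x\le\beta$.

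Existence of a fixed point of $M_{\tilde f}$, and hence of a solution, then follows from a fixed-point/degree argument for the compact operator $M_{\tilde f}$: along a homotopy for which $\alpha$ and $\beta$ remain lower and upper solutions, the localisation above keeps all solutions inside the bounded set $\Omega_{\alpha,\beta}$, so the Leray--Schauder degree on a large ball is well defined and nonzero. For the final assertion I assume $\alpha,\beta$ strict. Then \eqref{eq3} is strict, the contradiction in the previous paragraph becomes strict as well, and no fixed point of $M_{\tilde f}$ can reach $\partial\Omega_{\alpha,\beta}(0)$; in particular every fixed point lies in the open set $\Omega_{\alpha,\beta}(0)$, where $M_{\tilde f}=M_f$. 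Choosing a ball $B_R$ containing $\Omega_{\alpha,\beta}$ and all fixed points of $M_{\tilde f}$, the excision property gives
$$
\deg_{LS}(I-M_f,\Omega_{\alpha,\beta}(0),0)=\deg_{LS}(I-M_{\tilde f},\Omega_{\alpha,\beta}(0),0)=\deg_{LS}(I-M_{\tilde f},B_R,0),
$$
and the last degree equals $1$ by the standard computation for strict upper and lower solutions in \cite{opuscula} (a homotopy deforming $\tilde f$ into a nonlinearity strictly increasing in $u$, which by the same localisation keeps all zeros inside $B_R$ and ends at a map with a single nondegenerate zero). I expect the time-scale maximum principle of the second paragraph to be the only genuinely delicate step; granting it, both existence and the degree computation reduce to routine excision-and-homotopy arguments.
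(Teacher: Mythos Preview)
The paper does not actually prove this theorem: it is stated without proof and attributed as ``an adaptation of Theorem~3.7 in \cite{opuscula}''. There is therefore no argument in the paper to compare your proposal against.

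That said, your outline is precisely the standard truncation-plus-penalty scheme one expects such a proof to follow, and you correctly single out the only genuinely nonroutine point, namely the maximum-principle step at a right-scattered maximiser on a general time scale. Your identification of the compactness mechanism (that $\varphi^{-1}$ has bounded range, so $\|x^\Delta\|_\infty<c$ automatically) and of the excision/homotopy route to $\deg_{LS}=1$ are both in line with how the cited reference proceeds. One small caution: with the fixed-point operator $M_{\tilde f}(x)=\overline x+\overline{N_{\tilde f}x}+K(N_{\tilde f}x-\overline{N_{\tilde f}x})$ as defined in the paper, a fixed point $x$ satisfies $\overline{N_{\tilde f}x}=0$ and $(\varphi(x^\Delta))^\Delta=\tilde f(t,x,x^\Delta)$, so your maximum-principle contradiction should be checked against this exact equation rather than a generic second-order inequality; this does not change the argument but is worth making explicit when you write the details.
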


 \section{The solvability set $\mathcal I(p_0)$}

In this section, we shall prove that the solution set $\mathcal I(p_0)$ is a nonempty  compact set; furthermore, 
employing the method of upper and lower solutions it shall be verified that $\mathcal I(p_0)$ is an interval depending continuously on $p_0$. Finally,  the excision property of the degree will 
be employed to verify that if $s$ is an interior point of $\mathcal I(p_0)$, then the problem has at least $2$ geometrically different $T$-periodic solutions. 

\begin{thm}\label{te3}
Assume that  $p_{0}\in C_T$ has zero average. Then, there exist numbers $d(p_{0})$
and $D(p_{0})$, with   $-b\leq d(p_{0})\leq D(p_{0})\leq b$, such that (\ref{eq1}) has at least one $T$-periodic solution if and only if $s\in \left[d(p_{0}), D(p_{0}) \right]$. 
Moreover, the functions $d, D:\tilde {C_T}\to \mathbb R$ are continuous. 
\end{thm}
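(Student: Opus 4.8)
The plan is to establish the three assertions — nonemptiness, the interval characterization, and continuity — by combining a priori bounds with the method of upper and lower solutions provided by Theorem \ref{te2}.

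First I would produce an a priori estimate forcing $\mathcal I(p_0)\subset[-b,b]$. Integrating \eqref{eq1} over one period and using the $T$-periodicity of $x$, both $(\varphi(x^\Delta))^\Delta$ and $ax^\Delta$ integrate to zero, so $\frac1T\int_0^T b\sin x(t)\,\Delta t = s$. Since $|\sin|\le 1$, this immediately yields $|s|\le b$, giving the bound on any admissible $s$.

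Next I would show that the set of solvable $s$ is an interval by exhibiting ordered upper and lower solutions. The key observation is that for a suitably large constant $\gamma$ the constant function $\beta\equiv \gamma$ with $\gamma\equiv \pi/2\pmod{2\pi}$ satisfies $\varphi(\beta^\Delta)^\Delta=0$ and $b\sin\beta=b$, so it is an upper solution whenever $s\le b$; symmetrically a constant near $-\pi/2$ is a lower solution whenever $s\ge -b$. More usefully, to get the interval property I would argue monotonically: if $s_1<s_2$ are both solvable, with solutions $x_1,x_2$, then for any $s\in(s_1,s_2)$ a solution $x_2$ of the $s_2$-problem is a strict lower solution of the $s$-problem and $x_1+2k\pi$ (shifted up by an appropriate multiple of $2\pi$ so the ordering $\alpha\le\beta$ holds) is an upper solution; Theorem \ref{te2} then delivers a solution for $s$. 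This shows solvability is preserved for intermediate values, hence $\mathcal I(p_0)$ is an interval, and I define $d(p_0)=\inf\mathcal I(p_0)$, $D(p_0)=\sup\mathcal I(p_0)$. Closedness (so that the infimum and supremum are attained) follows from a compactness argument: if $s_n\to s_*$ are solvable with solutions $x_n$, the uniform bound $\|x_n\|_\infty<c$ together with the equation gives a uniform $C^1_T$ bound, whence by Arzel\`a--Ascoli a subsequence converges to a solution at $s_*$.

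Finally, for continuity of $d$ and $D$ I would exploit the same upper-and-lower-solution comparison but now varying $p_0$. Given $p_0^n\to p_0$ in $\tilde C_T$, a solution $x$ at the pair $(p_0,s)$ becomes, up to an arbitrarily small perturbation of $s$, a strict lower or upper solution for the nearby $p_0^n$-problem, because $\|p_0^n-p_0\|_\infty\to0$; this shows $d$ is upper semicontinuous and $D$ lower semicontinuous. The reverse semicontinuities follow from the compactness argument above applied to a sequence of solutions realizing $d(p_0^n)$ or $D(p_0^n)$. The main obstacle I anticipate is the delicate construction of \emph{strict} upper and lower solutions — Theorem \ref{te2} requires strictness to conclude, so I must perturb the constant barriers slightly (replacing $b$ by $b\mp\delta$ in the comparison) and verify that the induced strict inequalities in \eqref{eq3} survive on all of $\mathbb T$, keeping careful track that the relativistic constraint $\|\alpha^\Delta\|_\infty,\|\beta^\Delta\|_\infty<c$ is maintained throughout.
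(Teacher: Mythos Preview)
Your argument has a genuine gap at the nonemptiness step. The constant $\beta\equiv\pi/2$ is \emph{not} an upper solution for all $s\le b$: with $f(t,u,v)=p_0(t)+s-av-b\sin u$, the upper-solution inequality $0=(\varphi(\beta^\Delta))^\Delta\le f(t,\pi/2,0)=p_0(t)+s-b$ requires $s\ge b-p_0(t)$ for every $t$, hence $s\ge b-\min p_0>b$ whenever $p_0\not\equiv 0$. The symmetric claim for $-\pi/2$ fails in the same way. (This is exactly why the final Remark of the paper restricts the constant-barrier trick to the special case $\|p_0\|_\infty<b$.) If your claim were correct it would in fact force $\mathcal I(p_0)=[-b,b]$ for every $p_0$, which is false. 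The paper supplies the missing idea: it passes to the integro-differential Dirichlet problem \eqref{eq1int} with $s$ replaced by $s(x)=\tfrac bT\int_0^T\sin x\,\Delta t$, obtains a solution with $x(0)=x(T)=r$ by Schauder for each $r$, and checks that such a solution extends $T$-periodically to a solution of \eqref{eq1} with $s=s(x)$. Your interval (Step~3) and continuity (Step~5) arguments then match the paper's.

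Two smaller points. First, in the closedness step the bound $\|x_n^\Delta\|_\infty<c$ alone does not give $C^1_T$-compactness: you must also show $\varphi(x_n^\Delta)$ is uniformly bounded (equivalently, $\|x_n^\Delta\|_\infty$ stays bounded away from $c$), which the paper does via the equation and periodicity. Second, your worry about strictness is misplaced here: Theorem~\ref{te2} yields existence from a non-strict ordered pair; strictness is only needed for the degree computation used in Theorem~\ref{te4}.
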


\begin{proof}
For the reader's convenience, we shall proceed in several steps. 

\medskip

\noindent \underline{Step 1} (An associated integro-differential problem). Observe that if $x\in C^1_T$ is a solution of (\ref{eq1}), then, $\Delta$-integration over $[0,T]_\mathbb T$ yields  $s=\frac{b}{T}\int_0^T \sin (x(t))\Delta t$.  Therefore, it proves convenient to consider the integro-differential Dirichlet problem

\begin{equation}\label{eq1int}
\left\{\begin{array}{ll}
(\varphi(x^{\Delta}(t)))^{\Delta} +ax^{\Delta}(t)+ b\sin x(t) = p_0(t) + s(x),  \ \ \ t\in (0,T)_\mathbb{T}  \\
x(0)=x (T), 
\end{array}\right.
\end{equation}
with  $s(x):=\frac{b}{T}\int_0^T \sin (x(t))\Delta t$. By Schauder's fixed point theorem, 
it is straightforward to prove that for each $r\in \mathbb{R}$ there exists at least one solution $x\in C([0,T]_\mathbb T)$ of (\ref{eq1int}) such that $x(0) = x(T) = r$.

\medskip

\noindent \underline{Step 2} ($\mathcal I(p_{0}$) is is nonempty and bounded). 
Let $x$ be a solution of 
 (\ref{eq1int}) such that $x(0)=x(T)=r$, then integration
 over $[0,T]_\mathbb T$
yields

$$
\varphi(x^{\Delta}(T)) -
\varphi(x^{\Delta}(0)) + b\int_0^T\sin x(t)\Delta t = Ts(x),
$$
and hence
$\varphi(x^{\Delta}(T)) =
\varphi(x^{\Delta}(0))$. It follows that $x$ may be extended in a $T$-periodic fashion to a solution of (\ref{eq1}) with $s=s(x)$. 
In other words,
$$\mathcal I(p_0)= \{ s(x): x \hbox{ is a solution of (\ref{eq1int}) for some } r\in [0,2\pi]\} \ne \emptyset.
$$
Moreover, it is clear from definition that  $\left|s(x)\right|\leq b$, so $\mathcal I(p_{0})\subset [-b,b]$.

\medskip

\noindent \underline{Step 3} ($\mathcal I(p_{0}$)  is connected). Assume that   $s_{1}, s_{2}\in \mathcal I(p_{0})$ are such that $s_{1} < s_{2}$, and let $x_{1}$ and $x_{2}$  be $T$-periodic solutions of (\ref{eq1}) 
for $s_{1}$ and $s_{2}$, respectively. Then for any $s\in (s_{1}, s_{2})$ it is verified that $x_{1}$  and $x_{2}$ are   strict upper and a lower solutions of (\ref{eq1}), respectively. 
Replacing  $x_{1}$  by  $x_{1}+ 2k\pi$, with $k$ the first integer  such that $x_{2}<x_1 + 2k\pi$ and applying  Theorem \ref{te2} with $\alpha =x_2$ and $\beta=x_1 + 2k\pi$, we conclude that  problem $(\ref{eq1})$ has at least one $T$-periodic solution, whence $s\in \mathcal  I(p_{0})$.

\medskip

\noindent \underline{Step 4} ($\mathcal I(p_{0}$) is closed). Let $\left\{s_{n}\right\} \subset \mathcal I(p_{0})$ converge 
to  some $s$, and let $x_{n}\in C^1_T$ be a solution of (\ref{eq1}) for $s_n$. Without loss of generality, we may assume that $x_{n}(0) \in [0,2\pi]$. 
Because $\left\| x^{\Delta}_{n} \right\|_{\infty}<c$, by
 Arzel\`a-Ascoli theorem there exists a subsequence (still denoted  $\left\{x_{n}\right\}$)  that converges uniformly to some $x$.
Furthermore, from (\ref{eq1}) we deduce the existence of a constant $C$ independent of $n$ such that $ |(\varphi(x^{\Delta}_{n}(t)))^{\Delta}| \le C$ for all $t$. We claim that $\varphi(x^\Delta_n)$ is also uniformly bounded, that is, $\|x_n^\Delta\|_\infty$ is bounded away from $c$. Indeed, otherwise passing to a subsequence we may suppose for 
example that 
$\varphi(x^\Delta_n)_{\max} \to +\infty$. Because 
$\varphi(x_n^\Delta(t_1)) - \varphi(x_n^\Delta(t_0)) \le C(t_1-t_0) $
for all $t_1>t_0$, we deduce from periodicity that 
$\varphi(x^\Delta_n)_{\max} - \varphi(x^\Delta_n)_{\min} \le CT$ and, consequently, 
$\varphi(x_n^\Delta)_{\min}\to +\infty$. This implies that $(x_n^\Delta)_{\min}\to c$, which contradicts the fact that $x_n^\Delta$ has zero average. 
Using Arzel\`a-Ascoli again, we may assume that $\varphi(x^\Delta_n)$ converges uniformly to some function $v$ and, from the identity $x_n(t)=x_n(0)+\int_0^t x^\Delta_n(\xi)\Delta\xi$ we deduce that $x\in C^1_T$ and $x^\Delta = \varphi^{-1}(v)$. Now integrate the equation for each $n$ and take limit for $n\to\infty$ to obtain
$$
\varphi(x^\Delta(t)) = 
\varphi(x^\Delta(0)) + \int_0^t [s + p_0(\xi) -b\sin(x(\xi))]\Delta\xi  - a[x(t)-x(0)].
$$
In turn, this implies that 
$x$ is a solution of (\ref{eq1int}) with $s(x) = s$; hence, $\mathcal I(p_{0})$ is closed  and the proof is complete.

\medskip

\noindent \underline{Step 5} (continuous dependence on $p_0$). 
Let $\{p_0^n\}_{n\in \mathbb N}\subset \tilde{C_T}$ be a sequence that converges to some $p_0$. We shall prove that $D(p_0^n)\to D(p_0)$; the proof for $d$ is analogous. Similarly to Step 4, it is seen that if a subsequence of $\{ D(p_0^n)\}$ converges to some $D$, then the problem for $p_0$ with $s=D$ admits a solution and, consequently, $D\le D(p_0)$. Thus, it suffices to prove that $\liminf_{n\to\infty} D(p_0^n) \ge D(p_0)$. 
Indeed,  otherwise, passing to a subsequence 
we may suppose that $D(p_0^n)\to D< D(p_0)$. Fix $\eta >0$ such that $D+\eta <D(p_0)$ 
and let $x$ be a $T$-periodic solution of (\ref{eq1}) for $s=D(p_0)$. Take $n$ large enough such that
$$
p_0(t) + D(p_0) > p_0^n(t) + D + \eta > p_0^n(t) + D(p_0^n) \qquad \forall\, t\in [0,T]_\mathbb T
$$
and let $x_n$ be a $T$-periodic solution of (\ref{eq1}) for $p_0^n$ and $s=D(p_0^n)$. The previous inequalities imply that $x$ and $x_n$ are respectively a lower and an upper solution of the problem for $p_0^n$ and $s=D+\eta$ and, without loss of generality, we may assume that $x<x_n$. Thus, (\ref{eq1}) has a $T$-periodic solution for $p_0^n$ and $s=D+\eta >D(p_0^n)$, a contradiction.

\end{proof}

\medskip



The following theorem establishes the existence of at least two geometrically different $T$-periodic solutions to problem (\ref{eq1}) when $s$ is an interior point. 

\begin{thm}\label{te4}
Assume that  $p_{0}\in C_T$ has zero average. 
If $s\in \left(d(p_{0}), D(p_{0})\right)$, then the problem (\ref{eq1}) has at least two geometrically different $T$-periodic solutions.

\end{thm}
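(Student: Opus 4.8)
The plan is to exploit the Leray--Schauder degree machinery furnished by Theorem~\ref{te2}, combined with the excision and additivity properties of the degree. The key observation is that at an interior point $s\in(d(p_0),D(p_0))$ we may sandwich the problem between two \emph{strict} lower and upper solutions, obtained from solutions at nearby parameter values. Concretely, pick $s_1,s_2$ with $d(p_0)<s_1<s<s_2<D(p_0)$ and let $x_1,x_2$ be $T$-periodic solutions of (\ref{eq1}) for $s_1,s_2$ respectively. As in Step~3 of the proof of Theorem~\ref{te3}, $x_2$ is a strict lower solution and $x_1$ is a strict upper solution of (\ref{eq1}) for the parameter $s$; after a vertical shift $x_1\mapsto x_1+2k\pi$ we may arrange $x_2<x_1+2k\pi$, so that the ordered pair $(\alpha,\beta)=(x_2,x_1+2k\pi)$ consists of strict lower/upper solutions with $\alpha<\beta$.

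First I would invoke Theorem~\ref{te2} to obtain $\deg_{LS}(I-M_f,\Omega_{\alpha,\beta},0)=1$, which produces one solution $x^*$ in the open set $\Omega_{\alpha,\beta}$. The heart of the argument is then to manufacture a \emph{second}, geometrically distinct solution. The plan is to use the periodicity in the $x$-variable: since the equation is invariant under $x\mapsto x+2\pi$, the shifted functions $x_2+2\pi$ and $x_1+2(k+1)\pi$ are again strict lower/upper solutions, and one can consider a larger region, say $\Omega_{\alpha,\beta+2\pi}=\{x:\alpha\le x\le \beta+2\pi\}$, whose degree is also $1$. The intermediate shifted solution $x_1+2k\pi$ (equivalently $x^*+2\pi$) sits strictly between, and by the excision property the degree over the ``annular'' region obtained by removing a neighbourhood of the known solutions must account for an additional solution.

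More precisely, I would set up three nested admissible regions and use additivity of the degree: over the full region $\Omega_{\alpha,\beta+2\pi}$ the degree is $1$, while the contribution of a small isolating neighbourhood of the translated solution $x^*+2\pi$ (which lies in the interior) can be computed to be $1$ as well, since it is the image under the $2\pi$-shift of $x^*$. Excising both $x^*$ and $x^*+2\pi$ leaves a region on which, by subtraction, the degree is $1-1=0$; if there were no further solution this would be consistent, so the finer point is to instead compare the degree on $\Omega_{\alpha,\beta}$ (value $1$) against the degree on a slightly enlarged region that must still be $1$ but now contains two ordered solutions $x^*$ and $x^*+2\pi$, forcing the index sum to split and hence forcing a third critical point of nonzero index strictly between them. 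The cleanest route is: the degree on the big box is $1$; the local indices at the two order-extremal solutions $x^*$ and $x^*+2\pi$ are each $+1$; hence by additivity there must exist a solution in the gap with local index $-1$, and this solution is geometrically distinct from $x^*$ because it does not differ from it by a multiple of $2\pi$ (it lies strictly between $x^*$ and $x^*+2\pi$).

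The main obstacle I anticipate is the bookkeeping of the degrees and the verification that the local index at each order-extremal solution is genuinely $+1$: this requires checking that the solutions $x^*$ and $x^*+2\pi$ are \emph{isolated} (or reducing to that case by a perturbation/approximation argument) and that a small neighbourhood of each can be chosen as an admissible open set on which Theorem~\ref{te2} applies with its own strict sub/supersolution pair. A secondary difficulty is ensuring all the shifted barrier functions remain \emph{strict} lower/upper solutions after translation---this follows from the $2\pi$-periodicity of $\sin$ but must be stated carefully---and confirming that the solution found in the gap cannot coincide, up to a $2\pi$ translate, with the extremal ones, which is exactly the geometric distinctness claimed in the statement.
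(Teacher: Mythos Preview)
Your overall strategy---strict lower/upper solutions from nearby parameter values, Leray--Schauder degree, excision/additivity---is exactly the paper's, but you take a detour that creates the very obstacle you flag. You try to compute the \emph{local index} at a solution $x^*$ (and at $x^*+2\pi$), which forces you to worry about whether $x^*$ is isolated and to justify that its index is $+1$. None of this is needed.

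The paper's cleaner bookkeeping avoids actual solutions entirely in the degree count and uses only the \emph{barriers} $x_1,x_2$ (which are \emph{not} solutions for the given $s$, hence never lie on $\partial\Omega$). Choose the shift minimally: arrange $x_2<x_1$ but $x_2+2\pi\not\le x_1$. Then the two small boxes $\Omega_{x_2,x_1}$ and $\Omega_{x_2+2\pi,\,x_1+2\pi}$ are \emph{disjoint} open subsets of the big box $\Omega_{x_2,\,x_1+2\pi}$. By Theorem~\ref{te2} each of the three boxes has degree $1$ (strict sub/super solutions), so additivity gives degree $1-1-1=-1$ on the complement, producing a third solution $y_3$ there, in addition to $y_1\in\Omega_{x_2,x_1}$ and $y_2\in\Omega_{x_2+2\pi,\,x_1+2\pi}$. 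If $y_1$ and $y_2$ are already geometrically distinct you are done; if $y_2=y_1+2\pi$, then $y_3$---lying outside both small boxes---cannot equal $y_1$ or $y_1+2\pi$, and you are again done. No isolation, no local-index computation, no perturbation argument is required.
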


\begin{proof}
For $s\in \left(d(p_{0}), D(p_{0})\right)$, let $s_{1}:=d(p_{0})<s< D(p_{0}):=s_{2}$ and let
$x_1$, $x_2$ be  as in Step 3 of the previous proof. Then
 $x_{1}$ and $x_{2}$ are strict upper  and lower solutions for $s$,  respectively. Due to the $2\pi$-periodicity of (\ref{eq1}), we may assume that 
$x_{2}<x_1$ and $x_2 + 2\pi \not\le x_1$ and, consequently, 
$\Omega_{x_2,x_1}$ and $\Omega_{x_2+2\pi,x_1+2\pi}$ are disjoint open  subsets of 
$\Omega_{x_2,x_1+2\pi}$.
From Theorem \ref{te2} and the excision property of the Leray-Schauder degree, we deduce the existence of three different solutions $y_1, y_2, y_3\in C^1_T$ such that 
\begin{center}
   $x_2(t) <y_1(t)<x_1(t),$ \\ 
   $x_2(t)+2\pi< y_2(t)< x_1(t) + 2\pi$\\
      $x_2(t)<y_3(t)< x_1(t)+ 2\pi$
 \end{center}
 for all $t\in {\mathbb T}$.
If  $y_{2} = y_{1}+2\pi$, then $y_{3}\neq y_{1},  y_{1} + 2\pi$ and the conclusion follows. 

\end{proof}

 \section{Sufficient conditions for $0\in \mathcal  I(p_{0})$}

In this section, we shall obtain  conditions guaranteeing that $0$ belongs to the solvability set. Even in the continuous case, this is not clear when $a\ne 0$ since, as {it is} well known, counter-examples exist for the classical pendulum equation for arbitrary periods. 
In the relativistic case, however, it was proved that $0\in \mathcal I(p_0)$ when $T$ is sufficiently small and counter-examples for large values of $T$ are not yet known. 
Here, as mentioned in the introduction,
we shall 
improve the bounds for $T$ obtained in previous works for $\mathbb T=\mathbb R$. 
The results shall be expressed 
in terms of $k(\mathbb T)$, 
the optimal constant of the  inequality
$$
\|x-\overline x\|_\infty \le k \|x^\Delta\|_\infty,\qquad x\in C_T^1.
$$
 For instance, for arbitrary $\mathbb T$ it is readily seen
that $k(\mathbb T)\le \frac T2$, because $x^\Delta$ has zero average and hence, due to periodicity, 
$$x_{\max} - x_{\min} \le \int_{t_{\min}}^{t_{\max}} [x^\Delta(t)]^+ \Delta t \le \int_0^T [x^\Delta(t)]^+\Delta t = \frac 12 \int_0^T|x^\Delta(t)| \Delta t.
$$

We recall that, in the continuous case, the (optimal) Sobolev inequality $\|x-\overline x\|_\infty \le \sqrt{ \frac T{12} } \|x'\|_2$ implies that
 $k(\mathbb R)\le \frac T{2\sqrt 3}$. 

The main result of this section reads as follows. 

 \begin{thm}\label{bound}
 
 Assume that $ck(\mathbb T)<\pi$ and define the function
 $$
\psi(\delta):={2\delta} \cos (\delta)   + (cT- {2\delta} )\cos \left(  {ck(\mathbb T)} \right).
 $$
If $\psi(\delta)\ge 0$ for some $\delta \in (0,\frac \pi 2)$, then $0\in \mathcal I(p_0)$. Furthermore, if the previous inequality is strict, then $0\in \mathcal I(p_0)^\circ$.  
 \end{thm}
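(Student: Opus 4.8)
The plan is to show that $0 \in \mathcal I(p_0)$ by exhibiting an ordered pair of lower and upper solutions for the equation with $s=0$, and then invoking Theorem \ref{te2}. Given the symmetry of the problem (the forcing has zero average and the nonlinearity $b\sin x$ is odd), the natural strategy is to seek constant or nearly-constant candidate functions. Concretely, I would look for a lower solution of the form $\alpha \equiv$ const (or a small perturbation thereof) with value near $\pi - \delta$ and an upper solution near $\delta$, or vice versa, so that on $\alpha$ one has $b\sin\alpha$ bounded below by a positive quantity and on $\beta$ bounded above by a negative one, matching the required differential inequalities $(\varphi(\alpha^\Delta))^\Delta \ge f(t,\alpha,\alpha^\Delta)$ and $(\varphi(\beta^\Delta))^\Delta \le f(t,\beta,\beta^\Delta)$.

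The role of the function $\psi$ and the hypothesis $ck(\mathbb T)<\pi$ is to control how far a genuine solution candidate can oscillate. The key quantitative input is the inequality $\|x-\overline x\|_\infty \le k(\mathbb T)\|x^\Delta\|_\infty$ together with the a priori bound $\|x^\Delta\|_\infty < c$, which together give $\|x-\overline x\|_\infty \le ck(\mathbb T) < \pi$. Thus any solution stays within a window of width less than $\pi$ around its average, so that $\sin x$ does not change sign spuriously. I would therefore center the argument on a function whose average can be tuned: by sliding the average of a trial function through the interval $[0,2\pi]$ and using the intermediate value theorem on the map $r \mapsto s(x_r)$ (in the spirit of Steps 1--2 of Theorem \ref{te3}), one shows that $s=0$ is attained. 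The condition $\psi(\delta)\ge 0$ should encode precisely that, for the optimal choice of the oscillation parameter $\delta$, the cosine lower bounds $\cos(\delta)$ on the ``tight'' region and $\cos(ck(\mathbb T))$ on the remaining region combine so that the averaged contribution of $\sin x$ can be forced to vanish, i.e. the range of $s(x_r)$ straddles $0$.

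The main obstacle will be translating the scalar inequality $\psi(\delta)\ge 0$ into a rigorous construction of ordered upper and lower solutions, which requires a careful estimate of $\int_0^T \sin(x(t))\,\Delta t$ from below and above in terms of the oscillation of $x$. The delicate point is that $x$ is only known to lie in a window of width $ck(\mathbb T)$ around its mean, but this window need not be symmetric and the geometry on the time scale $\mathbb T$ forces one to split $[0,T]_\mathbb T$ into a part where $x$ is close to its extremum (contributing $\cos\delta$) and a complementary part (contributing $\cos(ck(\mathbb T))$), with the measures of these parts weighted as in $2\delta$ versus $cT-2\delta$. Establishing such a splitting rigorously, and checking that the differential inequalities (\ref{eq3}) indeed hold pointwise for the constructed $\alpha$ and $\beta$ with $\|\alpha^\Delta\|_\infty, \|\beta^\Delta\|_\infty < c$, is the technical heart of the argument. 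Once the non-strict inequality yields $0\in \mathcal I(p_0)$ via Theorem \ref{te2}, the strict case follows by noting that a strict inequality $\psi(\delta)>0$ produces strict lower and upper solutions, so $0$ becomes an interior point of $[d(p_0),D(p_0)]$ by the continuity of $d$ and $D$ established in Theorem \ref{te3} and the openness afforded by Theorem \ref{te4}.
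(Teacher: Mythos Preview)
Your proposal has the right quantitative ingredient but the wrong global mechanism. Constant (or near-constant) lower/upper solutions cannot carry the result for \emph{arbitrary} $p_0$: if $\alpha$ is constant then $(\varphi(\alpha^\Delta))^\Delta=0$ and the required inequality reduces to $b\sin\alpha \ge p_0(t)$ for all $t$, which forces $\|p_0\|_\infty\le b$ (this is exactly the easy case noted in the final remark of the paper). No amount of tweaking $\delta$ will fix this, and the ``sliding average plus intermediate value theorem'' idea is not rigorous either, since the solutions $x_r$ of (\ref{eq1int}) need not depend continuously (or even single-valuedly) on $r$.

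The paper's argument is a continuation/degree argument, not an upper--lower solutions argument. By the results cited from \cite{adsk}, it suffices to show that for every $\lambda\in(0,1]$ the equation
\[
(\varphi(x^\Delta))^\Delta=\lambda\bigl[p_0(t)-ax^\Delta-b\sin x\bigr]
\]
has no $T$-periodic solution with average $\pm\frac\pi 2$. One argues by contradiction: if $\overline x=\frac\pi 2$, integrating gives $\int_0^T\sin x(t)\,\Delta t=0$. The oscillation bound $|x(t)-\tfrac\pi 2|\le ck(\mathbb T)<\pi$ keeps $x$ inside $(-\tfrac\pi 2,\tfrac{3\pi}2)$, so $\sin x(t)\ge -\sin A$ with $A=ck(\mathbb T)-\tfrac\pi 2$. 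Setting $C_\delta=\{t:|x(t)-\tfrac\pi 2|\le\delta\}$, one estimates
\[
0=\int_0^T\sin x\,\Delta t\ge \cos(\delta)\,\mathfrak m(C_\delta)-\bigl[T-\mathfrak m(C_\delta)\bigr]\sin A,
\]
and the key missing step you do not mention is the measure bound $\mathfrak m(C_\delta)>\tfrac{2\delta}{c}$, obtained from $|x^\Delta|<c$ and periodicity (the graph of $x$ needs time at least $\delta/c$ to travel from the level $\tfrac\pi 2$ to $\tfrac\pi 2\pm\delta$). Substituting this gives $\psi(\delta)/c$ on the right, yielding the desired contradiction when $\psi(\delta)\ge 0$. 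The strict case is handled not via strict upper/lower solutions but by noting the same contradiction persists after adding a small constant $s$ to $p_0$. In short: your integral-splitting intuition is correct, but it is applied to a \emph{hypothetical} solution to derive a contradiction, not to build sub/supersolutions.
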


Before proceeding to the proof, it is worth to recall   that, from Theorem 4.1 and Example 5.3 in \cite{adsk}, in order to prove the existence of $T$-periodic solutions for $s=0$ it suffices to verify that the equation
\begin{equation}\label{int}
 \left(\frac{x{^\Delta}(t)}{\sqrt{1-\frac{x{^\Delta}(t)^2}{c^{2}}}}\right)^\Delta  =\lambda [p_0(t) -
 ax^\Delta(t)-  b\sin x(t)]
\end{equation} 
has no $T$-periodic solutions with average $\pm \frac\pi 2$. For example, if  $x\in C^1_T$ is a solution of (\ref{int}) such that $\overline x=\frac\pi 2$, then it follows from the definition of $k(\mathbb T)$ that, for all $t\in \mathbb T$,
$$
\left|x(t)-\frac\pi 2\right|\le ck(\mathbb T). 
$$
In particular, if $ck(\mathbb T)\le \frac \pi 2$,  then $x(t)\in [0,\pi]$ for all $t\in \mathbb T$ and, upon integration of equation (\ref{int}),  we deduce:
$$0 = b\int_0^T \sin(x(t))\Delta t >0. 
$$
The same contradiction is obtained also if $\overline x=-\frac \pi 2$. 
For example,  
 the condition $cT\le \pi$ is sufficient for arbitrary $\mathbb T$ and, in the continuous case, the condition $cT\le \sqrt{3}\pi$ is retrieved. 
 However,  
 the previous bound $ck(\mathbb T)\le \frac \pi 2$ can be improved, as we shall see in the following proof. 
 \medskip 
 
 \begin{proof1}
 From the preceding discussion, it may be assumed that  
 $\frac \pi 2< ck(\mathbb T)<\pi$.
  Suppose that $x$ is a solution of (\ref{int})
  such that   $\overline x =\frac \pi 2$, 
 then 
 $$x(t)\in \left[\frac \pi 2- ck(\mathbb T),\frac {\pi}2 +ck(\mathbb T)\right] 
 \subset 
 \left(-\frac \pi 2,\frac {3\pi}2\right)$$
 for all $t\in \mathbb T$ and hence
 $$
 \sin x(t)\ge -\sin(A)>-1, \qquad\hbox{ where } A = {ck(\mathbb T)} -\frac \pi 2.
 $$
Fix $\delta\in (0,\frac \pi2)$ and consider the set
$$C_\delta=\left\{ t\in [0,T]_\mathbb T: |x(t)-\frac\pi 2| \le \delta\right\}.
$$
Then
\begin{equation}\label{contrad}
\begin{array}{ccl}
    0 & = & \int_0^T\sin(x(t))\Delta t \ge \int_{C_\delta} (\sin(x(t))+\sin (A))\Delta t - T\sin (A)  \\
    {}&{}&{}\\
    {} &  \ge &  \left[\sin(\frac \pi 2 -\delta)+\sin (A)\right] \mathfrak m(C_\delta) - T\sin (A)\\
    {}&{}&{}\\
    {} & = & \cos (\delta) \mathfrak m(C_\delta) - [T- \mathfrak m(C_\delta)]\sin A,
\end{array}
\end{equation}
where $\mathfrak m(C_\delta)$ is the measure of the set $C_\delta$ associated to the $\Delta$-integral,  namely $\mathfrak m (C_\delta)= \int_{C_\delta}\Delta t$. 
Clearly, a contradiction is obtained when the latter term of (\ref{contrad}) is positive. 

 Moreover, notice that if $x(t_0)\le \frac \pi 2$ and $t_1>t_0$ is such that $x(t_1) \ge\frac \pi 2 + \delta$, 
 then
 $$
\delta \le  x(t_1)-x(t_0) =\int_{t_0}^{t_1} x^\Delta(s)\Delta s < c(t_1-t_0). 
 $$
 In the same way, if $t_0<t_1$ are such that 
 $x(t_0)\ge \frac \pi 2$ and $x(t_1)\le \frac \pi 2-\delta$, then $c(t_1-t_0)> \delta$. 
Thus, by periodicity, we deduce that $\mathfrak m(C_\delta)> \frac {2\delta} c$. 
The same conclusions are obtained if $\overline x=-\frac \pi 2$; 
hence, a sufficient condition for the existence of at least one $T$-periodic solution is that, for some $\delta\in (0,\frac \pi 2)$, 
 $$
 \cos (\delta) \frac {2\delta} c \ge \left(T- \frac {2\delta} c\right)\sin A
 $$
or, equivalently, that $\psi(\delta)\ge 0$. 
 Note, furthermore, that if the  inequality is strict, then a contradiction is still obtained as 
 in (\ref{contrad}) if we add a small parameter $s$ to the function $p_0$ in (\ref{int}).

 \end{proof1}

\begin{rem}

It is seen  that $\psi$ reaches its maximum at the unique $\delta^*\in (0,\frac \pi 2)$ such that 
\begin{equation}
    \label{delta}
    \cos(\delta^*) - \delta^* \sin(\delta^*) = \cos \left( ck(\mathbb T)\right).
\end{equation}
Thus, replacing (\ref{delta}) in $\psi$, a somewhat explicit condition on $T$ reads:   
 $$2(\delta^*)^2\sin(\delta^*)   +  cT \cos\left(ck(\mathbb T)\right)\ge 0.
 $$

\end{rem}

An immediate corollary is the following:

\begin{cor}\label{coro}
 There exists a constant $T^*$ with $cT^* > \pi$ such that 
 $0\in \mathcal I(p_0) $ for all $p_0\in \tilde C_T$ 
 if $T\le T^*$ and it is an interior point if $T<T^*$. 
 For the particular case $\mathbb T=\mathbb R$,  it is verified that
 $cT^*> \sqrt {3}\pi$.
\end{cor}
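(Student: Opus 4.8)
The plan is to deduce the corollary directly from Theorem \ref{bound} by a continuity-and-monotonicity argument on the function $\psi$, viewed as a function of $T$ through the quantity $cT$. First I would observe that the hypothesis $ck(\mathbb T)<\pi$ of Theorem \ref{bound} is automatically satisfied whenever $cT<2\pi$, since $k(\mathbb T)\le \frac T2$ for arbitrary $\mathbb T$; thus there is room to take $T$ strictly larger than the trivial bound $cT\le\pi$. The heart of the matter is to show that the condition ``$\psi(\delta)\ge 0$ for some $\delta\in(0,\frac\pi2)$'' holds for a range of $T$ extending beyond $cT=\pi$.

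To this end I would work with the optimal exponent $\delta^*$ from the Remark, satisfying \eqref{delta}, and evaluate the reduced condition
$$
\Phi(T):=2(\delta^*)^2\sin(\delta^*) + cT\cos\left(ck(\mathbb T)\right)\ge 0.
$$
At $cT=\pi$ we already know the problem is solvable (indeed the condition $cT\le\pi$ is sufficient for arbitrary $\mathbb T$), so $\Phi$ is nonnegative there; more importantly, the strict inequality can be verified at $cT=\pi$, which gives room to increase $T$. Treating $T$ as the variable and noting that $k(\mathbb T)$ and hence $\delta^*$ depend continuously on $T$, the function $\Phi$ is continuous, so by the intermediate value theorem there is a largest value $T^*$ (equivalently, a first zero of $\Phi$ as $T$ increases past $\pi/c$) for which $\Phi(T)\ge 0$ still holds. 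For $T\le T^*$ the condition of Theorem \ref{bound} is met and $0\in\mathcal I(p_0)$; when $T<T^*$ the inequality is strict, so $0\in\mathcal I(p_0)^\circ$. Since the argument uses only $ck(\mathbb T)$ and $cT$, and not $p_0$, the conclusion holds for all $p_0\in\tilde C_T$ simultaneously.

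For the continuous case $\mathbb T=\mathbb R$, where $k(\mathbb R)\le \frac{T}{2\sqrt3}$, I would substitute this sharper constant into $\Phi$ and check the sign at $cT=\sqrt3\pi$. There the argument $ck(\mathbb R)$ of the cosine equals $\frac{\sqrt3\pi\cdot(1/(2\sqrt3))}{1}=\frac\pi2$, so $\cos(ck(\mathbb R))=0$ and the second term of $\Phi$ vanishes, leaving $\Phi(T)=2(\delta^*)^2\sin(\delta^*)>0$; hence the strict inequality still holds at $cT=\sqrt3\pi$ and, by continuity, on an open neighbourhood to its right, which forces $cT^*>\sqrt3\pi$.

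The main obstacle I anticipate is the monotonicity bookkeeping: one must ensure that $\Phi$ genuinely changes sign (rather than merely dipping and recovering), so that $T^*$ is well defined as the threshold below which solvability is guaranteed. This requires controlling how $\delta^*$ and $\cos(ck(\mathbb T))$ vary with $T$ — in particular that the negative contribution $cT\cos(ck(\mathbb T))$ (negative once $ck(\mathbb T)>\frac\pi2$) eventually dominates the bounded positive term $2(\delta^*)^2\sin(\delta^*)$. The numerical refinement $cT^*>6.318$ mentioned in the introduction would then come from solving $\Phi(T)=0$ explicitly in the continuous case, which is a routine computation once the framework above is in place.
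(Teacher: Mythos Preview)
Your overall strategy---evaluate the sufficient condition of Theorem~\ref{bound} at the threshold period, observe strict positivity there, and extend by continuity---is exactly what the paper does. However, there is a genuine soft spot in your execution: you keep $k(\mathbb T)$ in the expression for $\Phi$ and then write that ``$k(\mathbb T)$ and hence $\delta^*$ depend continuously on $T$.'' This is not well-formed. The constant $k(\mathbb T)$ is a feature of the time scale, not of the period alone; there is no canonical way to vary $T$ while speaking of a single $k(\mathbb T)$. The paper's remedy is to first substitute the universal bound $k(\mathbb T)\le T/2$ (and $k(\mathbb R)\le T/(2\sqrt3)$ in the continuous case), obtaining the explicit two-variable function
\[
\Psi(\delta,T)=2\delta\cos\delta+(cT-2\delta)\cos\!\left(\tfrac{cT}{2}\right),
\]
which is now a bona fide function of $T$ and whose nonnegativity still implies $\psi(\delta)\ge 0$ (since $\cos$ is decreasing on $[0,\pi]$). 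With this substitution the check at $cT=\pi$ is immediate: $\cos(\pi/2)=0$, so $\Psi(\delta,\pi/c)=2\delta\cos\delta>0$ for every $\delta\in(0,\tfrac\pi2)$, and continuity in $T$ gives some $T^*>\pi/c$. You do exactly this computation in the continuous case at $cT=\sqrt3\pi$, but for general $\mathbb T$ you leave the verification at $cT=\pi$ as an assertion; once you pass to $\Psi$, it becomes the same one-line calculation.

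A second, smaller point: your worry about ``monotonicity bookkeeping'' is unnecessary for the corollary as stated. You do not need $T^*$ to be the \emph{first zero} of anything---the claim is only that some $T^*$ with $cT^*>\pi$ (respectively $>\sqrt3\pi$) exists. Strict positivity at the threshold plus continuity already gives such a $T^*$; the monotonicity of $\Psi$ in $T$ is relevant only for the quantitative refinement in the subsequent Remark, not for the corollary itself. Working with the optimal $\delta^*$ from \eqref{delta} is fine but also unnecessary: any fixed $\delta\in(0,\tfrac\pi2)$ already makes $\Psi(\delta,\pi/c)>0$.
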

 
 \begin{proof}
  For arbitrary $\mathbb T$, we know already that 
   $k(\mathbb T)\le \frac T2$, then 
 a sufficient condition when $cT \in (\pi,2\pi)$ is the existence of $\delta\in (0,\frac \pi 2)$ such that $\Psi(\delta,T)\ge 0$, where
 $$\Psi(\delta,T):={2\delta} \cos (\delta)   + (cT- {2\delta} )\cos \left(  \frac {cT}2 \right).
 $$
The result now follows trivially from the fact that $\Psi(\delta,\frac \pi c)=2\delta\cos (\delta)$. The proof is similar for $\mathbb T=\mathbb R$, now taking
$$\Psi_{cont}(\delta,T):={2\delta} \cos (\delta)   + (cT- {2\delta} )\cos \left(  \frac {cT}{2\sqrt 3} \right).
$$
   
 \end{proof}
  
  \begin{rem}
  A more quantitative version of the previous corollary follows from the 
 fact that the function $\Psi$ is strictly decreasing with respect to $T$ when $cT\in (\pi,2\pi)$ and arbitrary $\delta\in (0,\frac \pi 2)$.
 In particular, observe that if $\Psi (\delta,\hat T)\ge 0$ for some $\hat T\in (\frac\pi c,\frac {2\pi}c)$ and some $\delta\in (0,\frac\pi 2)$,  then $\Psi(\delta,T)>0$ for $T\in (\frac \pi c,\hat T)$. 
 Thus, a lower bound for $T^*$ is given by the unique value of $T\in (\frac \pi c,\frac {2\pi}c)$ such that 
 $$
 \max_{\delta\in [0,\frac \pi 2]} \Psi(\delta,T)=0.
 $$
Analogous conclusions are obtained when $\mathbb T=\mathbb R$
 using $\Psi_{cont}$ instead of $\Psi$. 
  \end{rem}

 
\subsection{Numerical examples and final remarks}

As shown in Corollary \ref{coro}, 
the bound thus obtained always improves the simpler one  $ck(\mathbb T)\le \frac \pi 2$ and, in particular,  it guarantees that if the latter inequality is satisfied 
then
 $0$ is in fact an interior point of $ \mathcal I(p_0)$. 
In the continuous case, 
an easy numerical computation 
gives the sufficient condition
$cT\le 6.318$, slightly better than 
the bound $cT< \sqrt 3 \pi$ obtained
in \cite{ma1} (see Figure 1). 
 For arbitrary $\mathbb T$,   numerical experiments show that $0\in \mathcal I(p_0)^\circ$ for $cT\le 4.19$, as shown in  Figure 2.

\begin{figure}[ht]
\begin{center}
\includegraphics[width=8cm,height=5cm]{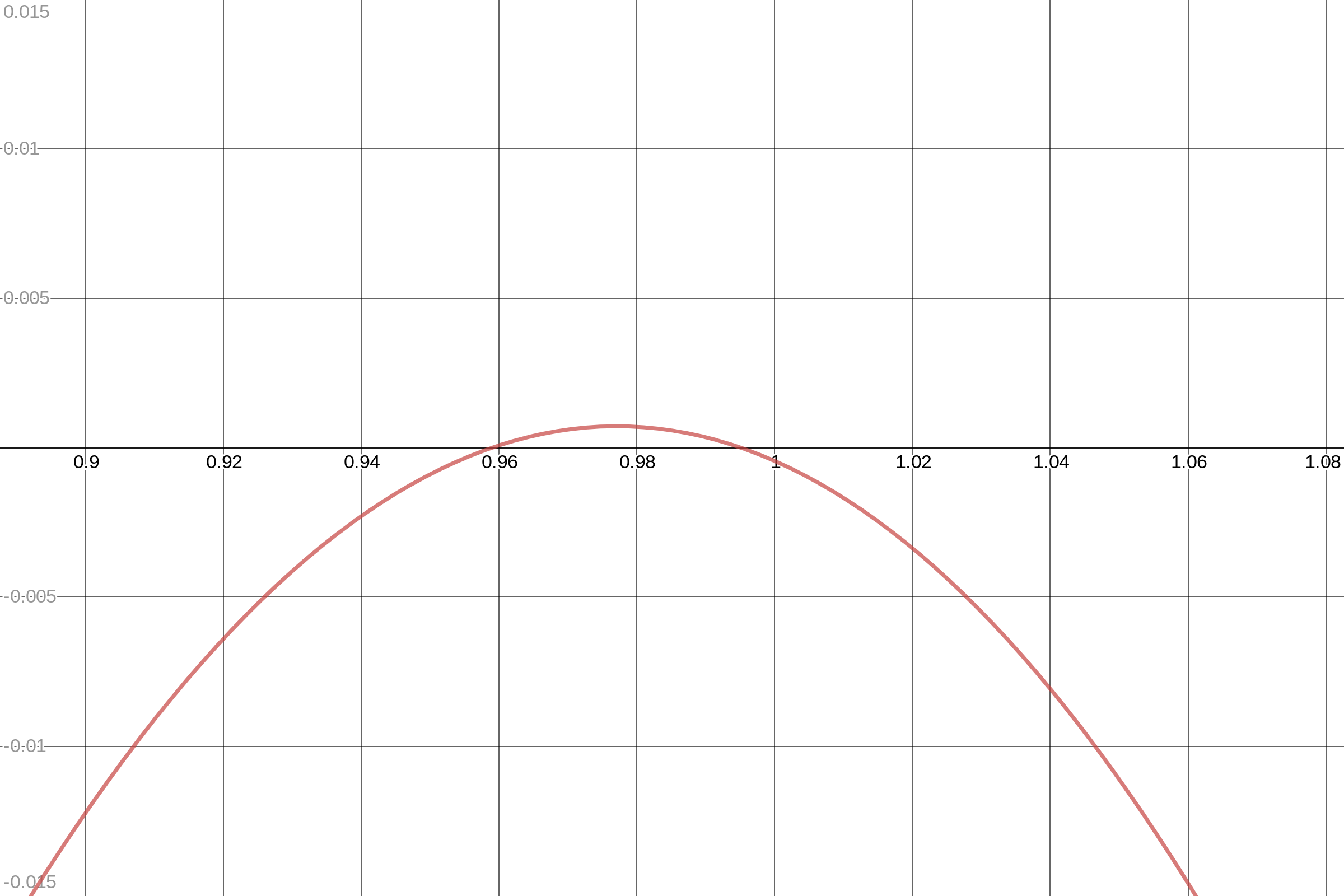}
\caption{Graph of $\psi$ for $\mathbb T=\mathbb R$ with $cT=6.318$}
\end{center}
\end{figure}

\begin{figure}[ht]
\begin{center}
\includegraphics[width=8cm,height=5cm]{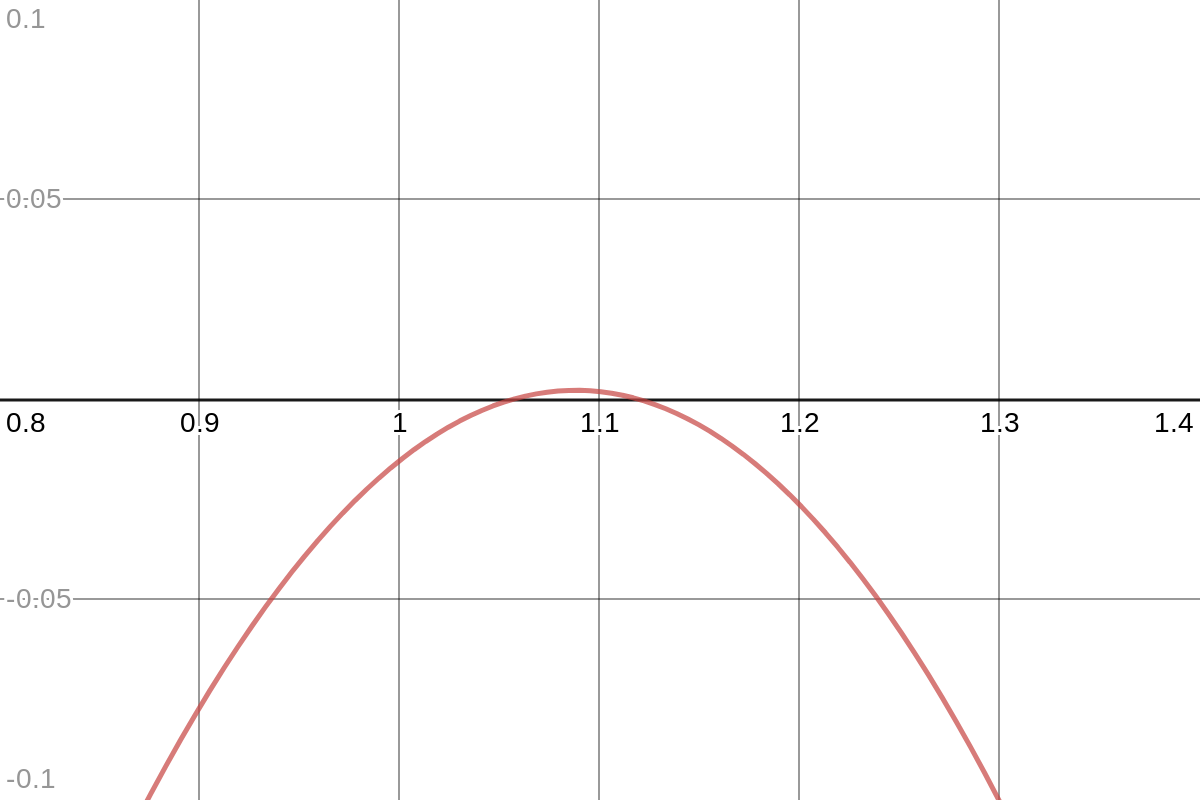}
\caption{Graph of $\psi$ for $k(\mathbb T)=\frac T2$ and $cT=4.19$}
\end{center}
\end{figure}    


\begin{rem}
An estimation of the constant $k(\mathbb T)$ could be obtained analogously to the continuous case as shown for example in \cite{maw}. 
Let $\{e_n\}_{n\in \mathbb Z}\subset C_T$ be an orthonormal basis of $L^2(0,T)_\mathbb T$ with $e_0\equiv \frac 1{\sqrt {T}}$ and $E_n$ be a primitive of $e_n$ such that $\overline E_n=0$. Writing $x^\Delta = \sum_{n\ne 0} a_n e_n$, it follows that 
$$\|x-\overline x\|_\infty = \left|\sum_{n\ne 0} a_n E_n\right| \le \|x^\Delta\|_{L^2} 
\sqrt{\sum_{n\ne 0} \|E_n\|_\infty^2}\le 
 \|x^\Delta\|_\infty 
\sqrt{T\sum_{n\ne 0} \|E_n\|_\infty^2}.
 $$
When $\mathbb T=\mathbb R$, taking the usual Fourier basis one has that $\|E_n\|_\infty = \frac {\sqrt {T}}{2\pi n}$ and the value $k(\mathbb R)\le \frac T{2\sqrt{3}}$ is obtained from the well known equality
$\sum_{n\in\mathbb N} \frac 1{n^2} =\frac {\pi^2}6$. 

\end{rem}

\begin{rem}

As mentioned in the introduction, Theorem \ref{bound}  allows to compute an inferior bound for the length of the solvability interval which
does not depend on $p_0$, provided that $T$ is small enough. In some obvious cases, inferior bounds are obtained for arbitrary $T$: for example, if   $\|p_0\|_\infty < b$ then 
$[-\varepsilon,\varepsilon]\subset \mathcal I(p_0)$
for $\varepsilon = b-\|p_0\|_\infty$.
This is  readily verified taking $\alpha=\frac \pi 2$ and $\beta = \frac {3\pi}2$ as lower and upper solutions. 
\end{rem}


\section*{Acknowledgements} 
This research was partially supported by projects PIP  {11220130100006CO CONICET}
and UBACyT  {20020160100002BA}.

\bibliographystyle{plain}

\begin{thebibliography}{99}

\bibitem{opuscula} P. Amster, M. P. Kuna and D. P. Santos {\em Multiple solutions of Boundary Value Problems on Time Scales for a $\varphi$-Laplacian Operator}, to appear in Opuscula Mathematica.

\bibitem{adsk} P. Amster, M. P. Kuna and D. P. Santos.
{\em Existence  and Multiplicity of Periodic Solutions  for  Dynamic  Equations with  Delay  and singular $\varphi$-laplacian of Relativistic Type}, 
submitted. 


\bibitem{bjm}
C. Bereanu, P. Jebelean and J. Mawhin,
\textit{Periodic Solutions of Pendulum-Like Perturbations of Singular and Bounded $\phi$-Laplacians.}
J Dyn Diff Equat. {22}  (2010), 463--471.

\bibitem{ma1}C. Bereanu and J. Mawhin, \textit{Existence and multiplicity results for some nonlinear problems with singular $\varphi$-laplacian}, J. Differential Equations. 243 (2007), 536--557.

\bibitem{bt} C. Bereanu and P. J. Torres,\textit{Existence of at least two periodic solutions of the forced relativistic pendulum}, Proceedings of the American Mathematical Society (2012): 2713--2719.


\bibitem{bo1} M. Bohner and A. Peterson, \textit{Dynamic Equations on Time Scales}, Birkhauser Boston, Massachusetts, 2001.

\bibitem{bo2} M. Bohner and A. Peterson (eds.), \textit{Advances in Dynamic Equations on Time Scales}, Birkhauser Boston, Massachusetts, 2003.

\bibitem{bre}H. Brezis, J. Mawhin, \textit{Periodic solutions of the forced relativistic pendulum}, Differential
Integral Equations. 23(9) (2010) 801--810.

\bibitem{castro} 
 A. Castro, \textit{Periodic solutions of the forced pendulum equation}. Diff. Equations (1980), 149--160.



\bibitem{fm} {G. Fournier and J. Mawhin, \textit{On periodic solutions of forced pendulum-like equations}, J. Differential Equations 60.3 (1985): 381--395.}



\bibitem{hilgerthe}S. Hilger, \textit{Ein Ma$\beta$kettenkalk\"ul mit Anwendung auf Zentrumsmanningfaltingkeiten}, PhD thesis, Universit\"at W\"urzburg, 1988.



 \bibitem{maw} J. Mawhin,
 \textit{Degr\'e topologique
 et solutions
 p\'eriodiques
 des syst\`emes
 diff\'erentiels
non lin\'eaires},
 Bull. Sot. Roy. Sci. Li\`ege 38 (1969),
 308--398.
 
 
\bibitem{man11} J. Mawhin, \textit{Periodic solutions of the forced pendulum: classical vs relativistic}, Matematiche (Catania). 65(2) (2010), 97--107.

 \bibitem{maw-75} J. Mawhin, \textit{Seventy-five years of global analysis around the forced pendulum equation}. In: Proceedings of Equadiff 9, Masaryk University, Brno (1997), 115--145.  


\bibitem{man12}J. Mawhin, \textit{Topological Degree Methods in Nonlinear Boundary Value Problems}, CBMS series No. 40, American Math. Soc., Providence RI, 1979.


\bibitem{pras}
K. Prasad and P. Murali, 
\textit{Heteroclinic Solutions of Singular $\phi$-Laplacian
Boundary Value Problems on Infinite Time Scales}.
 Electronic Journal of Qualitative Theory of Differential Equations 2012 (2012), 1--9. 





\bibitem{torres} P. Torres, \textit{Periodic oscillations of the relativistic pendulum with friction}, Phys. Lett. A.
372(42) (2008) 6386--6387.

\bibitem{torres2} P. Torres
\textit{Nondegeneracy of the periodically forced Li\'enard differential equation with $\phi$-laplacian. }
Communications in Contemporary Mathematics
13, No. 2 (2011) 283--292.




\end{thebibliography}

\end{document}